\documentclass{article}
\usepackage[utf8]{inputenc}
\usepackage{amsmath,amssymb,amsthm}
\usepackage{bm}
\usepackage{algorithm,algorithmic}
\usepackage{enumerate}
\usepackage[margin=25mm]{geometry}
\usepackage[numbers]{natbib}
\usepackage{mathtools}
\mathtoolsset{showonlyrefs}
\usepackage[colorlinks=true,linkcolor=blue,urlcolor=black]{hyperref}

\newcommand{\argmin}{\mathop{\rm argmin}\limits}
\newcommand{\dom}{\mathop{\rm dom}}
\newcommand{\RR}{\mathbb{R}}
\newcommand{\NN}{\mathbb{N}}

\newtheorem{theorem}{Theorem}[section]
\newtheorem{lemma}{Lemma}[section]
\newtheorem{definition}{Definition}[section]
\newtheorem{assumption}{Assumption}[section]
\counterwithin{figure}{section}
\counterwithin{table}{section}

\title{
A proximal gradient method with adaptive backtracking for weakly smooth multiobjective optimization
}
\author{Yuki Miyazaki\thanks{Department of Mathematics, College of Science and Technology, Nihon University, Japan. E-mail: csyi25012@g.nihon-u.ac.jp}
\and
Masaru Ito\thanks{Department of Mathematics, College of Science and Technology, Nihon University, Japan. E-mail: ito.masaru@nihon-u.ac.jp}
\and
Shotaro Yagishita\thanks{Risk Analysis Research Center, The Institute of Statistical Mathematics, Japan, E-mail: syagi@ism.ac.jp} \thanks{Center for Social Data Structuring, Joint Support-Center for Data Science Research, Japan}}
\date{\today}
\begin{document}
\maketitle
\begin{abstract}
In this paper, we propose a proximal gradient method with adaptive linesearch for multiobjective optimization problems whose objective functions are weakly smooth, i.e., they have H\"older continuous gradients. The proposed method is parameter-free as we do not require prior knowledge of parameters related to the weak smoothness of the objective function; the proposed linesearch finds an appropriate step-size that adapts to the weak smoothness.
The complexity guarantee analyzed in this paper for the non-convex case is compatible with related works and our algorithm accepts coercer stationarity measure compared to existing methods. 
We also establish a novel complexity result for the convex case which improves the one in non-convex case.

{\flushleft{{\bf Keywords:} Multiobjective optimization; Proximal gradient method; Backtracking linesearch; H\"{o}lder continuous gradient. }}
\end{abstract}

\section{Introduction}\label{sec:intro}
Multiobjective optimization is a problem that involves multiple objective functions to be optimized simultaneously. A major optimality concept for this problem is the Pareto optimality identifying points on which any single objective function cannot be improved without degrading other objective functions.
Multiobjective optimization has wide applications in problems involving multi-decisions such as  economics, finance, robust optimization, and many others (see, e.g., \cite{evans1984overview, fukuda2014survey, marler2004survey, sener2018multi}).

In this paper, we consider a multiobjective optimization problem 
\begin{equation}\label{prob}
\begin{array}{ll}
\text{minimize} & F(x) = (F_1(x),\ F_2(x),\cdots ,\ F_m(x))^\top\\
\text{subject to} & x\in\RR^n,
\end{array}
\end{equation}
for a vector-valued function
$F:\RR^n\rightarrow (\RR\cup\{\infty\})^m$.
For each $i=1,\ldots,m$, we assume that the component
$F_i:\RR^n\rightarrow\RR$ 
is expressed as a composite form
\begin{equation}\label{comp-form}
F_i(x) := f_i(x) + g_i(x)
\end{equation}
where $f_i:\RR^n\to \RR$ is a continuously differentiable function and $g_i:\RR^n\to \RR\cup\{\infty\}$ is a proper, lower semi-continuous, and convex function.

\paragraph{Related works}

Over a couple of decades, \emph{descent methods} in multiobjective optimization have been developed as extensions of the ones from single objective case, which includes, for instance, the steepest descent method \cite{fliege2000steepest}, the projected gradient method \cite{drummond2004projected, fukuda2011convergence}, quasi-Newton method \cite{qu2011quasi}, etc.
Such methods aim to find a suitable search direction along which the next iterate is updated.
For multiobjective optimization with the composite form~\eqref{comp-form}, a proximal gradient method (PGM) was proposed by Tanabe et al. \cite{tanabe2019proximal}. Subsequently, the same authors \cite{tanabe2023convergence} provided convergence rate results of the PGM where the functions $f_i$ are assumed to have Lipschitz gradients. 
The convergence rate analyses were performed in terms of the \emph{merit functions} $u_0(x)$ and  $w_\ell(x)$ proposed in \cite{tanabe2024merit} that measure the concepts of weak Pareto optimality and Pareto stationarity at a point $x$, respectively.
Under Lipschitz continuity of $\nabla f_i$'s, the PGM proposed in \cite{tanabe2023convergence} guarantees to find an approximate Pareto stationary point $x$ satisfying $w_1(x)\leq \varepsilon$ within $O(\varepsilon^{-1})$ iterations. They also showed that, when all $f_i$ are convex, the same PGM finds an approximate weak Pareto optimal point $x$ satisfying $u_0(x)\leq \varepsilon$ in $O(\varepsilon^{-1})$ iterations. 

Beyond the Lipschitz continuity of gradients, parameter-free PGMs \cite{pinheiro2025universal,Amaral2025derivativefree} were recently developed for the problem \eqref{prob} when the components $f_i$ are \emph{weakly smooth}, i.e., the gradients $\nabla f_i$ are H\"older continuous. These methods are parameter-free in the sense that they do not need for knowing constants related to H\"{o}lder continuity such as the (worst) H\"older exponent $\nu_{\min} \in (0,1]$. In general non-convex setting, these PGMs guarantee a convergence rate of
$\displaystyle O(\varepsilon^{-\frac{1+\nu_{\min}}{2\nu_{\min} }} )$ to achieve $\ell_k^2\|d^k\|^2 \leq \varepsilon$ for a search direction $d^k$ and a linesearch parameter $\ell_k$ (which is fixed to $1$ in \cite{pinheiro2025universal}).
Although the convergence rates for weakly smooth and non-convex setting were analyzed by aforementioned studies,
the ones in weakly smooth and convex case have not been specifically addressed.

\paragraph{Contribution of this research}

In the present work, we propose a parameter-free multiobjective proximal gradient method (Algorithm~\ref{alg1}) by introducing new linesearch strategy that is adaptive to weak smoothness of the functions $f_i$. We establish analyses of iteration complexity of our method for both non-convex and convex cases.
The contribution of our work is summarized as follows (see also Table~\ref{table:works} in Section~\ref{sec:comparison} for a comparison with related works).
\begin{itemize}
\item For general non-convex case, it is shown in Theorem~\ref{Thm:noncon-comp} that our method finds an approximate Pareto stationary point $x^k$ satisfying $\ell_k w_{\ell_k}(x^k) \leq \varepsilon$ with iteration complexity at most $O(\varepsilon^{-\frac{1+\nu_{\min}}{2\nu_{\min}}})$ where $\ell_k \geq 1$ is a step parameter determined by linesearch.
Although the iteration complexity guarantee matches with the prior works \cite{Amaral2025derivativefree,pinheiro2025universal}, our criterion $\ell_k w_{\ell_k}(x^k) \leq \varepsilon$ achieves coercer approximation of Pareto stationarity because it implies both $w_1(x^k)\leq \varepsilon$ and $\ell_k^2\|d^k\|^2 \leq 2\varepsilon$ (as well as $\|d^k\|^2\leq 2\varepsilon$ with $\ell_k=1$).

\item In the case when all components $f_i$ of the multiobjective function are convex, we provide an enhanced convergence analysis improving the complexity bound compared to non-convex case.
The iteration complexity guarantee for achieving $\ell_k w_{\ell_k}(x^k)\leq \varepsilon$ is of $O(\varepsilon^{-\frac{1}{2\nu_{\min}}})$ as presented in Theorem~\ref{Thm:conv-comp}.
This is a novel result of the multiobjective PGM for the weakly smooth convex case.
Although a complexity analysis for another merit function $u_0(x^k)$ was conducted in previous research by Tanabe et al. \cite{tanabe2023convergence}, the complexity guarantee for $\ell_k w_{\ell_k}(x^k)$ is also important from practical point of view because it provides a verifiable stopping criterion.
\end{itemize}

The outline of this paper is as follows.
Section~\ref{sec:pre} presents several notations and concepts related to Pareto optimality and Pareto stationarity, and outlines the merit functions for multiobjective optimization proposed in \cite{tanabe2024merit}. Section~\ref{sec:proposed} presents the proposed PGM and establishes its iteration complexity guarantee in weakly smooth multiobjective optimization. Finally, Section~\ref{sec:conclusion} summarizes this paper.
\section{Preliminaries}\label{sec:pre}
This paper uses the symbol $[1:m] := \{1,2,\cdots, m\}$ for $m\in\NN$. We denote by $\top$ the transpose.

In this paper, we consider the multiobjective optimization problem \eqref{prob} in the composite form \eqref{comp-form}.
For the objective function $F:\RR^n\to (\RR\cup \{+\infty\})^m$ in the problem \eqref{prob}, its domain is defined by
$$\dom F := \{x\in \RR^n~|~F_i(x) < + \infty\ (\forall i\in[1:m])\},$$
for which we assume $\dom F \ne \emptyset$ throughout the paper. By the convexity of components $g_i$, the set $\dom F$ is convex.

In addition, throughout the paper, we assume that each component $f_i$ is \emph{weakly smooth} on $\dom F$ in the sense that the gradient $\nabla f_i$ is H\"{o}lder continuous on $\dom F$.
That is, assume that there exist a H\"older exponent $\nu_i\in (0,1]$ and a H\"older coefficient $M_i > 0$ such that
\begin{equation}\label{eq:Hol}
    \|\nabla f_i(x) - \nabla f_i(y)\|
\leq M_i\|x - y\|^{\nu_i},\quad \forall x,\ y\in \dom F.
\end{equation}
The case $\nu_i=1$ corresponds to $M_i$-Lipschitz continuity of $\nabla f_i$ that is useful for the convergence analysis of descent methods (e.g. \cite{tanabe2023convergence}). 
For weakly smooth functions $f_i$, the definition \eqref{eq:Hol} implies the following useful inequality:
$$
f_i(y) \leq f_i(x) + \nabla f_i(x)^\top (y-x) + \frac{M_i}{1+\nu_i} \|y-x\|^{1+\nu_i},\quad \forall x,y \in \dom F.
$$
In this paper, the following property is essential for the analysis of the proposed PGMs.

\begin{lemma}[{\cite[Lemma 2]{nesterov2015universal}}] \label{Lem:Nes}
Let $f:\RR^n\to\RR$ be a continuously differentiable function such that $\nabla f(x)$ is H\"older continuous on a convex set $D$ with an exponent $\nu\in(0,1]$ and a coefficient $M > 0$. Then, for any $\delta>0$ and $x,y \in D$, we have
\begin{equation}\label{eq:Nes-Holder}
f(y) \leq f(x) + \nabla f(x)^\top (y - x) +  \dfrac{1}{2} \left( \dfrac{1 - \nu}{1 + \nu}\cdot\dfrac{1}{\delta} \right)^\frac{1 - \nu}{1 + \nu}M^\frac{2}{1 + \nu} \|y - x\|^2 + \dfrac{\delta}{2}.
\end{equation}
\end{lemma}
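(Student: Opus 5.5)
Writing $r := \|y-x\|$, the plan is to start from the H\"older descent inequality stated just before the lemma,
$$
f(y) \leq f(x) + \nabla f(x)^\top (y-x) + \frac{M}{1+\nu} r^{1+\nu},
$$
which holds for $x,y\in D$ because $D$ is convex. The lemma then reduces to the scalar inequality
$$
\frac{M}{1+\nu} r^{1+\nu} \leq \frac{1}{2}\left(\frac{1-\nu}{1+\nu}\cdot\frac{1}{\delta}\right)^{\frac{1-\nu}{1+\nu}} M^{\frac{2}{1+\nu}} r^2 + \frac{\delta}{2}
\qquad \text{for all } r\geq 0.
$$
When $\nu=1$ the coefficient is $0^0=1$ and the inequality reads $\frac{M}{2}r^2 \le \frac{M}{2}r^2+\frac{\delta}{2}$, which is trivial. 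So I will assume $\nu<1$.

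For $\nu\in(0,1)$ I will apply the weighted Young inequality $ab \le \frac{a^p}{p}+\frac{b^q}{q}$ with conjugate exponents $p=\frac{2}{1+\nu}$ and $q=\frac{2}{1-\nu}$. Set $a = (\tau M^{2/(1+\nu)} r^2)^{(1+\nu)/2}$ and $b = M r^{1+\nu}/a$, which gives $b = \tau^{-(1+\nu)/2}$, a constant depending only on $\tau$. Here $\tau>0$ is a free parameter. Young then gives
$$
M r^{1+\nu} \leq \frac{1+\nu}{2}\,\tau M^{\frac{2}{1+\nu}} r^2 + \frac{1-\nu}{2}\,\tau^{-\frac{1+\nu}{1-\nu}}.
$$

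Dividing by $1+\nu$ produces a bound of the form $\frac{\tau}{2} M^{2/(1+\nu)} r^2 + \frac{1-\nu}{2(1+\nu)}\tau^{-(1+\nu)/(1-\nu)}$. I will then choose $\tau$ so that the constant term equals $\delta/2$, that is,
$$
\tau^{-\frac{1+\nu}{1-\nu}} = \frac{1+\nu}{1-\nu}\,\delta,
\qquad\text{hence}\qquad
\tau = \left(\frac{1-\nu}{1+\nu}\cdot\frac1\delta\right)^{\frac{1-\nu}{1+\nu}}.
$$
This is exactly the stated coefficient, and the lemma follows.

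The only delicate point is the exponent bookkeeping in the Young step. As a cross-check, I can instead minimize $\phi(r) = \frac{\tau}{2}M^{2/(1+\nu)}r^2 + \frac{\delta}{2} - \frac{M}{1+\nu}r^{1+\nu}$ over $r\ge0$ by calculus and verify that the minimum value is $0$ for this choice of $\tau$.
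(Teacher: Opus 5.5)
Your argument is correct and is essentially the standard proof of this result. The paper imports it as Nesterov's Lemma~2 without proof, and Nesterov's argument likewise reduces to the scalar inequality via the H\"older descent bound and closes it with Young's inequality with conjugate exponents $\frac{2}{1+\nu}$ and $\frac{2}{1-\nu}$, with $\nu=1$ covered by the convention $0^0=1$. Your exponent bookkeeping and your choice of $\tau$ check out, and the only cosmetic fix is at $r=0$: define $b:=\tau^{-(1+\nu)/2}$ directly rather than as the quotient $Mr^{1+\nu}/a$.
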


\paragraph{Pareto optimality and merit functions}

Let us recall some concepts of optimality in multiobjective optimization problems. 
\begin{definition}
For the vector-valued function $F$ of problem \eqref{prob} and $x^\ast\in\dom F$, we say that
\begin{enumerate}[(i)]
\item
$x^\ast$ is a \emph{Pareto optimal point of $F$} if there is no $x \in \dom F$ that satisfies both $F(x) \leq F(x^\ast)$ and $F(x) \neq F(x^\ast)$;
\item
$x^\ast$ is a \emph{weakly Pareto optimal point of $F$} if there is no $x \in \dom F$ that satisfies $F(x) < F(x^\ast)$;
\item
$x^\ast$ is a \emph{Pareto stationary point of $F$} if $\displaystyle\max_{i\in[1:m]}F'(x^\ast;d)\geq 0$ holds for any direction $d\in\RR^n$.
\end{enumerate}
\end{definition}

Here, $F_i'(x^*;d):=\lim_{\alpha \downarrow 0} \frac{F_i(x^*+\alpha d)-F_i(x^*)}{\alpha} \left(= \nabla f_i(x^*)^\top d +g_i'(x^*;d)\right)$ represents the directional derivative of $F_i$ at $x^* \in \dom F$ along $d$, whose existence in $(-\infty,\infty]$ is ensured by the convexity of $g_i$. 
These optimality concepts are known to have the following relationships \cite{tanabe2019proximal}. 
\begin{lemma}
For the vector-valued function $F$ of problem \eqref{prob} and $x \in \dom F$, the following assertions hold.
\begin{enumerate}[(i)]
\item If $x$ is a Pareto optimal point of $F$, then $x$ is a weakly Pareto optimal point of $F$. 
\item If $x$ is a weakly Pareto optimal point of $F$, then $x$ is a Pareto stationary point of $F$.
\item If $x$ is a Pareto stationary point of $F$ and all components $F_i$ of $F$ are convex, then $x$ is a weakly Pareto optimal point of $F$. 
\end{enumerate}
\end{lemma}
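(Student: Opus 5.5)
Part (i) follows straight from the definitions, parts (ii) and (iii) are proved by contraposition, and the only ingredients are the directional-derivative formula $F_i'(x;d)=\nabla f_i(x)^\top d+g_i'(x;d)$ and convexity of $\dom F$.

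For (i), suppose $x$ is not weakly Pareto optimal. Then some $y\in\dom F$ satisfies $F(y)<F(x)$ componentwise. Hence $F(y)\le F(x)$ and $F(y)\neq F(x)$, so $x$ is not Pareto optimal.

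For (ii), suppose $x$ is not Pareto stationary. Then there is a direction $d$ with $\max_i F_i'(x;d)<0$. Since every $F_i'(x;d)$ is finite and negative, each difference quotient $(F_i(x+\alpha d)-F_i(x))/\alpha$ converges to a negative number as $\alpha\downarrow 0$. Because there are only finitely many indices $i\in[1:m]$, we can pick a single $\alpha>0$ that makes all $m$ quotients negative at once. This gives $F_i(x+\alpha d)<F_i(x)<\infty$ for every $i$, so $x+\alpha d\in\dom F$ and $F(x+\alpha d)<F(x)$. Therefore $x$ is not weakly Pareto optimal.

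For (iii), assume all $F_i$ are convex and that $x$ is not weakly Pareto optimal, so some $y\in\dom F$ has $F(y)<F(x)$. Set $d=y-x$. For each $i$, convexity makes the difference quotient $\alpha\mapsto (F_i(x+\alpha d)-F_i(x))/\alpha$ nondecreasing in $\alpha\in(0,1]$. Taking the limit as $\alpha\downarrow0$ therefore gives
\[
F_i'(x;d)\le F_i(y)-F_i(x)<0 .
\]
Hence $\max_i F_i'(x;d)<0$, and $x$ is not Pareto stationary.

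The one point needing care is the existence and finiteness of the directional derivatives. These take values in $(-\infty,\infty]$, which is why (ii) requires the strict negativity assumption to get finite negative limits. The $f_i$ part is harmless, being $C^1$; the monotone-quotient property of the convex function $g_i$ handles the rest.
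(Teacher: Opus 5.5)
Your proof is correct. The paper gives no proof of its own and simply cites \cite{tanabe2019proximal}; your contrapositive arguments are the standard ones for these facts: for (ii), negative directional-derivative limits together with the finiteness of $[1:m]$, and for (iii), the convexity bound $F_i'(x;d)\le F_i(y)-F_i(x)$.

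One harmless nitpick: $g_i'(x;d)$ can actually equal $-\infty$ at boundary points of $\dom g_i$ (e.g.\ $g(t)=-\sqrt{t}$ for $t\ge 0$ and $+\infty$ otherwise, at $t=0$), so ``finite'' in (ii) is not guaranteed despite the paper's remark. Your argument only uses that each limit lies in $[-\infty,0)$, so it goes through unchanged.
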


Finally, we introduce \emph{merit functions} $w_\ell(x)$ and $u_0(x)$ that provide optimality measures of given points for the objective function $F$. 

Given a constant $\ell > 0$, we define the functions $w_\ell:\dom F\to \RR$ and $d_\ell:\dom F\to \RR^n$ by
\begin{align}
w_\ell(x) := &\max_{y\in \RR^n}\left[\min_{i\in[1:m]}\{\nabla f_i(x)^\top (x - y)\right.
\left. + g_i(x) - g_i(y)\} - \frac{\ell}{2}\|y - x\|^2\right],\\
d_\ell(x) := &\argmin_{d\in \RR^n}\left[ \max_{i\in[1:m]}\{\nabla f_i(x)^\top d + g_i(x + d) - g_i(x)\} + \dfrac{\ell}{2}\|d\|^2\right]. \label{compute:dl}
\end{align}
Clearly, the optimal value of \eqref{compute:dl} is given by $-w_\ell(x)$.
We also define the function $u_0:\dom F\to \RR\cup \{\infty\}$ as
$$
u_0(x) := \sup_{y\in\RR^n}\min_{i\in[1:m]}\left(F_i(x) - F_i(y) \right).
$$
These merit functions coincide with optimality measures commonly used in the single objective case $m=1$ and $g_1\equiv 0$ as they are given by $u_0(x) = F(x)-\inf F$, $w_\ell(x) = \frac{1}{2\ell}\|\nabla F(x)\|^2$, and $d_\ell(x) = -\frac{1}{\ell}\nabla F(x)$. 

These functions have been used in iteration complexity analysis of PGMs in the previous research \cite{tanabe2023convergence}, and the following facts are known.
\begin{lemma}[\cite{tanabe2024merit}]\label{Lem:merit}
Let $F$ be the vector-valued function of problem \eqref{prob}. Then, the following assertions hold for any $x\in \dom F$. 
\begin{enumerate}[(i)]
\item $w_\ell(x) \geq 0$ and
$u_0(x) \geq 0$ hold. 
\item $x$ is a Pareto stationary point of $F$ $\iff$ $w_\ell(x)=0$ $\iff$ $d_\ell(x)=0$.
\item $x$ is a weakly Pareto optimal point of $F$ if and only if $u_0(x)=0$.
\item For $\ell \geq r > 0$, 
$$w_\ell(x) \leq w_r(x) \leq \frac{\ell}{r}w_\ell(x)$$
hold. In particular, we have $\ell w_\ell(x) \geq w_1(x)$ for all $\ell \geq 1$.
\end{enumerate}
\end{lemma}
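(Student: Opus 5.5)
The statement is Lemma~\ref{Lem:merit}, and I will prove its four parts in order, working directly from the definitions of $w_\ell$, $d_\ell$ and $u_0$. Write
$$\phi_x(y):=\min_{i\in[1:m]}\{\nabla f_i(x)^\top(x-y)+g_i(x)-g_i(y)\}.$$
Since each $g_i$ is convex, each map $y\mapsto \nabla f_i(x)^\top(x-y)+g_i(x)-g_i(y)$ is concave, so $\phi_x$ is concave. Subtracting $\frac{\ell}{2}\|y-x\|^2$ makes the objective strongly concave. Because $x\in\dom F$, the objective is finite at $y=x$, so the maximum is attained at a unique point; this point is $x+d_\ell(x)$.

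\textbf{Part (i).} Take $y=x$ in both definitions. This gives $\phi_x(x)-0=0$, hence $w_\ell(x)\ge 0$, and $\min_i(F_i(x)-F_i(x))=0$, hence $u_0(x)\ge 0$.

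\textbf{Part (iv).} I would use the substitution $y=x+td$. First, $w_\ell\le w_r$ is immediate, since $\frac{\ell}{2}\|\cdot\|^2\ge\frac r2\|\cdot\|^2$ pointwise when $\ell\ge r$. For the other inequality, let $y^*=x+d$ be the maximizer for $w_r$ and set $y_t=x+td$ with $t=r/\ell\in(0,1]$. Concavity of $\phi_x$ together with $\phi_x(x)=0$ gives $\phi_x(y_t)\ge t\,\phi_x(y^*)$. Therefore
$$w_\ell(x)\ge t\,\phi_x(y^*)-\tfrac{\ell}{2}t^2\|d\|^2 = t\Big(\phi_x(y^*)-\tfrac{r}{2}\|d\|^2\Big)=\tfrac{r}{\ell}\,w_r(x),$$
which is exactly $w_r(x)\le\frac{\ell}{r}w_\ell(x)$. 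The final claim $\ell w_\ell(x)\ge w_1(x)$ follows by taking $r=1$.

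\textbf{Part (ii).} I prove the chain of equivalences in three steps.

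\emph{$w_\ell(x)=0\iff d_\ell(x)=0$.} If $d_\ell(x)=0$, the optimal value is attained at $y=x$, where it equals $0$, so $w_\ell(x)=0$. Conversely, if $w_\ell(x)=0$, then $y=x$ attains the maximum, and uniqueness of the maximizer forces $d_\ell(x)=0$.

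\emph{Not Pareto stationary $\Rightarrow$ $w_\ell(x)>0$.} Suppose some $d$ has $\max_i F_i'(x;d)<0$. Convexity of $g_i$ gives $g_i(x+td)-g_i(x)=t\,g_i'(x;d)+o(t)$, so for small $t>0$,
$$\max_i\{t\nabla f_i(x)^\top d+g_i(x+td)-g_i(x)\}+\tfrac{\ell}{2}t^2\|d\|^2<0.$$
Hence $w_\ell(x)>0$.

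\emph{$w_\ell(x)>0$ $\Rightarrow$ not Pareto stationary.} Let $d=d_\ell(x)\neq 0$, so that $\max_i\{\nabla f_i(x)^\top d+g_i(x+d)-g_i(x)\}\le -w_\ell(x)-\frac{\ell}{2}\|d\|^2<0$. Convexity of $g_i$ gives $g_i'(x;d)\le g_i(x+d)-g_i(x)$, and therefore $\max_i F_i'(x;d)<0$, so $x$ is not Pareto stationary.

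This last step is where care is needed. One must use convexity of $g_i$ both for the existence of the directional derivative and for bounding the difference quotient, and one must handle the possibility that $g_i'(x;d)=+\infty$ (that is, $x+d\notin\dom g_i$). This case is excluded because the optimal value is finite.

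\textbf{Part (iii).} This is a restatement of the definition. The quantity $u_0(x)>0$ holds if and only if there is some $y$ with $\min_i(F_i(x)-F_i(y))>0$, that is, with $F(y)<F(x)$ componentwise, and such a $y$ lies in $\dom F$ automatically. Combining this with part (i), $u_0(x)=0$ if and only if no such $y$ exists, which is exactly weak Pareto optimality of $x$.
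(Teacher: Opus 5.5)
Your proof is correct. The paper gives no proof of this lemma; it simply imports it from the merit-function paper of Tanabe, Fukuda and Yamashita, so there is no in-paper argument to compare against. Your route works directly from the definitions and is self-contained:
\begin{itemize}
\item testing $y=x$ for (i), and unwinding the definition of $u_0$ for (iii);
\item for (iv), the convex-combination point $x+\frac{r}{\ell}d$, combined with concavity of $\phi_x$ and $\phi_x(x)=0$;
\item for (ii), monotonicity of the difference quotients of the $g_i$.
\end{itemize}
This makes the paper's later uses of the lemma independent of the reference.

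Two details need one extra clause each.
\begin{itemize}
\item \emph{Attainment of the maximum.} Strong concavity and finiteness at $y=x$ alone do not guarantee that the maximum defining $w_\ell(x)$ is attained. For example, $-y^2$ restricted to $(0,1)$ and set to $-\infty$ elsewhere never reaches its supremum. You also need upper semicontinuity of $y\mapsto\phi_x(y)-\frac{\ell}{2}\|y-x\|^2$, which follows from the lower semicontinuity of the $g_i$.
\item \emph{The step not Pareto stationary $\Rightarrow$ $w_\ell(x)>0$.} The expansion $g_i(x+td)-g_i(x)=t\,g_i'(x;d)+o(t)$ presupposes that $g_i'(x;d)$ is finite. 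It can equal $-\infty$ at boundary points of $\dom g_i$. It is cleaner to divide the $i$-th term by $t$. That quotient tends to $F_i'(x;d)<0$ (possibly $-\infty$), so it is negative for all sufficiently small $t>0$, simultaneously for all $i\in[1:m]$.
\end{itemize}
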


We also note the following relation between $w_\ell(x)$ and $d_\ell(x)$.

\begin{lemma}\label{lem:termination}
Given $x \in \dom F$ and $\ell>0$, we have
$$
\frac{\ell}{2}\|d_\ell(x)\|^2\leq w_{\ell}(x).
$$
\end{lemma}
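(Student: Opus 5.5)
We prove the inequality $\frac{\ell}{2}\|d_\ell(x)\|^2\leq w_\ell(x)$ by exploiting the strong convexity of the subproblem \eqref{compute:dl}. Set
\[
\varphi(d) := \max_{i\in[1:m]}\{\nabla f_i(x)^\top d + g_i(x+d) - g_i(x)\}.
\]
The function $\varphi$ is convex, since it is a pointwise maximum of convex functions, and it satisfies $\varphi(0)=0$ because $x\in\dom F$. Hence $\psi(d):=\varphi(d)+\frac{\ell}{2}\|d\|^2$ is $\ell$-strongly convex and proper lower semicontinuous. Its unique minimizer is $d^*:=d_\ell(x)$, and its minimum value is $\psi(d^*)=-w_\ell(x)$. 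The latter follows from the substitution $y=x+d$ in the definition of $w_\ell$, which turns $\min_i\{\cdots\}-\frac{\ell}{2}\|y-x\|^2$ into $-\psi(d)$.

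The key step is the quadratic growth property of strongly convex functions around their minimizer:
\[
\psi(d)\ \ge\ \psi(d^*)+\frac{\ell}{2}\|d-d^*\|^2\qquad\forall d\in\RR^n.
\]
This follows from $0\in\partial\psi(d^*)$ together with the strong convexity subgradient inequality. One may also argue directly: for $t\in(0,1)$, apply strong convexity along the segment between $d^*$ and $d$, use $\psi(d^*+t(d-d^*))\ge\psi(d^*)$, divide by $t$, and let $t\downarrow 0$.

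Evaluating the growth inequality at $d=0$ gives
\[
0=\psi(0)\ \ge\ -w_\ell(x)+\frac{\ell}{2}\|d^*\|^2,
\]
which is exactly the claim. The only mildly delicate point is justifying the quadratic growth inequality when $\varphi$ may take the value $+\infty$. The segment/limit argument handles this without any subdifferential calculus, because $\psi(d^*)$ is finite and $\psi(0)=0$ is finite, so the inequality only needs to be checked along the segment joining two points of the domain.
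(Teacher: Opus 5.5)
Your proposal is correct and matches the paper's proof. The paper also treats $d_\ell(x)$ as the minimizer of the $\ell$-strongly convex subproblem with optimal value $-w_\ell(x)$, and evaluates the quadratic growth inequality (cited from Beck's book, Theorem~5.25) at $d=0$, where the objective vanishes; your segment-and-limit argument simply proves that cited inequality directly, including when some $g_i$ takes the value $+\infty$.
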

\begin{proof}
Recall that $d_\ell(x)$ is the optimal solution to the following $\ell$-strongly convex minimization:
$$\min_{d}\left\{\max_{i\in[1:m]}(\nabla f(x)^\top d + g_i(x+d) - g_i(x)) + \frac{\ell}{2}\|d\|^2\right\}=-w_{\ell}(x).$$
Since the objective value is zero at $d=0$, the optimality of $d_\ell(x)$ and the $\ell$-strong convexity imply (see \cite[Theorem~5.25]{beck2017first})
$$
0 \geq -w_\ell(x) + \frac{\ell}{2}\|d_\ell(x)-0\|^2.
$$
\end{proof}

In this paper, the proposed method is designed to find approximate solutions satisfying $\ell w_\ell(x)\leq \varepsilon$, see Section~\ref{sec:proposed} for further discussions.  

\paragraph{Existing proximal gradient methods}

Here, we review some existing PGMs for (weakly) smooth multiobjective optimization problems \eqref{prob}.

A proximal gradient method for the problem \eqref{prob} were first developed by Tanabe et al. \cite{tanabe2019proximal} which extends earlier works on multiobjective steepest descent and projected gradient methods \cite{bonnel2005proximal,fliege2000steepest}.
Given an initial point $x^0 \in \dom F$, this method performs the iteration
\begin{equation}\label{tanabe-pgm}
x^{k+1} = x^k + d^k, \text{ where }
d^k = d_{\ell_k}(x^k) = \argmin_{d\in\RR^n} \left\{\max_{i\in [1:m]} \left(\nabla f_i(x^k)^\top d + g_i(x^k+d)-g_i(x^k)\right) + \frac{\ell_k}{2}\|d\|^2\right\},
\end{equation}
for a step parameter $\ell_k>0$.
If each $f_i$ has an $L$-Lipschitz continuous gradient, then the PGM~\eqref{tanabe-pgm} with a fixed step parameter $\ell_k\equiv \ell > L$ ensures a convergence rate
$
\min_{k \in [0:K-1]} w_1(x^k) \leq O(1/k)
$
in general and
$
u_0(x^k) \leq O(1/k)
$
in the case when $f_i$ are convex (see \cite{tanabe2023convergence}).

Regarding PGMs for weakly smooth objective functions, Pinheiro and Grapiglia \cite{pinheiro2025universal} proposed a projected gradient method when each $g_i$ is the indicator function of a closed convex feasible set $\Omega$.
This algorithm admits the search direction $d^k$ in \eqref{tanabe-pgm} with $\ell_k=1$ (see \cite[Remark 2]{pinheiro2025universal}).
They introduced a linesearch procedure to select a suitable step-size $\alpha_k>0$ incorporating the update $x^{k+1} = x^k + \alpha_k d^k$ instead of the one in \eqref{tanabe-pgm}, which enables us to adapt (unknown) parameters $M_i$ and $\nu_i$ in the H\"older continuity of $\nabla f_i$. Their algorithm ensures $\|d_{1}(x^k)\|^2\leq \varepsilon$ within $O(\varepsilon^{-\frac{1+\nu_{\min}}{2\nu_{\min}}})$ iterations where $\nu_{\min}=\min_{i\in[1:m]}\nu_i$ is the smallest H\"older exponent.

Another related method is a quasi-Newton type method for the problem \eqref{prob} proposed by Amaral et al. \cite{Amaral2025derivativefree}, which includes a PGM as its special case. In this method, the search direction $d^k$ is computed similar to \eqref{tanabe-pgm} with an additional quadratic term $d^\top B_{i,k} d$ for some (uniformly bounded) positive definite matrices $\{B_{i,k}\}$. To adapt the weak smoothness, this method employs a backtracking procedure to select suitable $\ell_k$ to ensure a sufficient decrease of the objective. Under the weak smoothness of each $f_i$, this method ensures $\ell_k^2\|d_{\ell_k}(x^k)\|^2\leq \varepsilon$ within $O(\varepsilon^{-\frac{1+\nu_{\min}}{2\nu_{\min}}})$ iterations \cite[Theorem 4.3]{Amaral2025derivativefree}.

\section{Proposed parameter-free proximal gradient method}\label{sec:proposed}
In this section, we propose a parameter-free PGM (Algorithm \ref{alg1}) for \eqref{prob} finding an approximate Pareto stationary point.
We will conduct its complexity analysis in terms of the merit function $\ell w_\ell(x)$ for the non-convex and the convex cases.
\begin{figure}[htbp]
\begin{algorithm}[H]
	\caption{Parameter-free proximal gradient method with backtracking}
	\label{alg1}
	\textbf{Input:} $\displaystyle x^0 \in \dom F$, $\varepsilon >0$, and $\ell_{-1}\geq 2$.
    \begin{algorithmic}[1]
    \FOR{$k = 0,1,2,\cdots$}
    \STATE Set $\ell \leftarrow \max\left\{\dfrac12,\dfrac{\ell_{k-1}}{4}\right\}.$
    \REPEAT
    \STATE Set $\ell \leftarrow 2\ell. $
    \STATE Compute $d^k \leftarrow d_\ell(x^k)=\underset{d\in \RR^n} {\operatorname{argmin}}\left[\displaystyle \max_{i\in[1:m]}\{\nabla f_i(x^k)^\top d + g_i(x^{k} + d) - g_i(x^k)\} + \dfrac{\ell}{2}\|d\|^2\right].$
    \STATE Compute $w_\ell(x^k) \leftarrow - \left[\displaystyle \max_{i\in[1:m]}\{\nabla f_i(x^k)^\top d^k + g_i(x^{k} + d^k) - g_i(x^k)\} + \dfrac{\ell}{2}\|d^k\|^2\right].$
    \STATE Set $x^{k+1} \leftarrow x^k + d^k.$
    \UNTIL{the following condition holds:
    \begin{equation}\label{eq:P} \tag{P}
        \forall i\in[1:m],\quad F_i(x^{k+1}) - F_i(x^k) \leq - w_\ell(x^k) + \dfrac{\varepsilon}{2\ell}.
    \end{equation}}
	\STATE Set $\ell_k \leftarrow \ell.$
    \ENDFOR
	\end{algorithmic}
\end{algorithm}
\end{figure}
The proposed method employs a backtracking strategy to find an appropriate parameter $\ell=\ell_k$ so that the descent condition \eqref{eq:P} is fulfilled.

In Algorithm~\ref{alg1}, we remark that the initial trial value of $\ell$ is actually $\max\left\{1,\dfrac{\ell_{k-1}}{2}\right\}$ in view of the steps 3 and 5.

Regarding the termination of proposed Algorithm~\ref{alg1}, it is reasonable to terminate the method when the merit function $w_\ell(x^k)$ is small enough achieving a given tolerance. In particular, when aiming to improve the accuracy of the approximate solution compared to {\rm\cite{Amaral2025derivativefree,pinheiro2025universal}}, we adopt the following termination condition:
$$\ell_k w_{\ell_k}(x^k) \leq \varepsilon.$$
It is important to note that $w_{\ell_k}(x^k)$ is computable provided the solvability of subproblem to obtain $d_{\ell_k}(x^k)$.
Since $\ell_k \geq 1$ by the construction, we have the following implications (by Lemma~\ref{Lem:merit} (iv) and Lemma \ref{lem:termination})
\begin{equation}
\begin{array}{ccc}\label{eq:termination}
\ell_kw_{\ell_k}(x^k) \leq \varepsilon &\Longrightarrow& w_{1}(x^k) \leq \varepsilon \\
\Downarrow& & \Downarrow\\
\ell_k^2\|d_{\ell_k}(x^k)\|^2\leq 2\varepsilon & & \|d_{1}(x^k)\|^2\leq 2\varepsilon.
\end{array}
\end{equation}
The conditions in \eqref{eq:termination} except $\ell_kw_{\ell_k}(x^k) \leq \varepsilon$ were used for convergence analysis of PGMs in \cite{Amaral2025derivativefree,pinheiro2025universal,tanabe2023convergence}.

The following lemma shows that the backtracking procedure in Algorithm~\ref{alg1} is well-defined and a bound of the number of inner loops can be obtained.
\begin{lemma}\label{Lem:noncon}
We have the following for Algorithm \ref{alg1}.
\begin{enumerate}[(i)]
\item At each iteration of Algorithm~\ref{alg1}, the condition \eqref{eq:P} is satisfied (i.e., the inner loop terminates) whenever
\begin{equation}\label{eq:ell-N}
\ell \geq \max_{i\in[1:m]}\left(\dfrac{1 - \nu_i}{1 + \nu_i}\dfrac{1}{\varepsilon}\right)^\frac{1 - \nu_i}{2 \nu_i} M_i^\frac{1}{\nu_i}=: N.
\end{equation}
\item The parameter $\ell_k$ admits the following bound.
$$\ell_k \leq \max\{\ell_{-1},2N\},\quad \forall k \geq -1.$$
\end{enumerate}
\end{lemma}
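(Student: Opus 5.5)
For part (i), I will apply Lemma~\ref{Lem:Nes} to each $f_i$ on the convex set $\dom F$ with $x=x^k$, $y=x^{k+1}=x^k+d^k$, and with the specific choice $\delta=\varepsilon/\ell$. This gives
\begin{equation}
f_i(x^{k+1}) \leq f_i(x^k) + \nabla f_i(x^k)^\top d^k + \frac{L_i(\ell)}{2}\|d^k\|^2 + \frac{\varepsilon}{2\ell},
\end{equation}
where
\begin{equation}
L_i(\ell) := \left(\frac{1-\nu_i}{1+\nu_i}\cdot\frac{\ell}{\varepsilon}\right)^{\frac{1-\nu_i}{1+\nu_i}} M_i^{\frac{2}{1+\nu_i}}.
\end{equation}
Note that $x^{k+1}\in\dom F$, since the subproblem value is finite and hence $g_i(x^k+d^k)<\infty$. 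Adding $g_i(x^{k+1})-g_i(x^k)$ and bounding $\nabla f_i(x^k)^\top d^k+g_i(x^k+d^k)-g_i(x^k)$ by the maximum over $i$ gives
\begin{equation}
F_i(x^{k+1})-F_i(x^k)\leq -w_\ell(x^k)-\frac{\ell}{2}\|d^k\|^2+\frac{L_i(\ell)}{2}\|d^k\|^2+\frac{\varepsilon}{2\ell}.
\end{equation}
This uses the identity from step 6 of Algorithm~\ref{alg1}. It therefore suffices to show that $L_i(\ell)\leq \ell$ for every $i$ whenever $\ell\geq N$.

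The main (though elementary) step is this exponent calculation. The inequality $L_i(\ell)\le\ell$ is equivalent to
\begin{equation}
\left(\frac{1-\nu_i}{1+\nu_i}\frac{1}{\varepsilon}\right)^{\frac{1-\nu_i}{1+\nu_i}}M_i^{\frac{2}{1+\nu_i}}\leq \ell^{1-\frac{1-\nu_i}{1+\nu_i}}=\ell^{\frac{2\nu_i}{1+\nu_i}}.
\end{equation}
Raising both sides to the power $\frac{1+\nu_i}{2\nu_i}>0$ turns this into
\begin{equation}
\ell\geq\left(\frac{1-\nu_i}{1+\nu_i}\frac1\varepsilon\right)^{\frac{1-\nu_i}{2\nu_i}}M_i^{\frac1{\nu_i}},
\end{equation}
which holds for all $i$ when $\ell\ge N$. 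I will treat the case $\nu_i=1$ separately, reading $0^0=1$: there Lemma~\ref{Lem:Nes} degenerates to the Lipschitz descent lemma with $L_i=M_i$, and the requirement is simply $\ell\ge M_i$, which is consistent with the formula for $N$.

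For part (ii), I will argue by induction on $k$; the case $k=-1$ is trivial. Suppose $\ell_{k-1}\le\max\{\ell_{-1},2N\}$. The inner loop tries the values $\ell=\max\{1,\ell_{k-1}/2\}\cdot 2^j$ for $j=0,1,\dots$, and by (i) it stops no later than the first trial value that is at least $N$.
\begin{itemize}
\item If the first trial value already satisfies (P), then
\begin{equation}
\ell_k=\max\{1,\ell_{k-1}/2\}\le\max\{\ell_{-1},2N\},
\end{equation}
using $1\le\ell_{-1}/2$ and the induction hypothesis.
\item Otherwise, the last rejected value $\ell_k/2$ must be less than $N$, since any value at least $N$ would have been accepted. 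Hence $\ell_k<2N$.
\end{itemize}
In both cases $\ell_k\le\max\{\ell_{-1},2N\}$, which completes the induction. Part (i) also guarantees that the inner loop terminates, because the trial values double without bound.
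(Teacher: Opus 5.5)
Your proof is correct and follows the paper's argument. For (i) you apply Lemma~\ref{Lem:Nes} with $\delta=\varepsilon/\ell$ and use the exponent equivalence $L_i(\ell)\le\ell \iff \ell\ge\bigl(\tfrac{1-\nu_i}{1+\nu_i}\tfrac{1}{\varepsilon}\bigr)^{\frac{1-\nu_i}{2\nu_i}}M_i^{1/\nu_i}$; for (ii) you use the same two cases, first trial accepted versus last rejected trial $\ell_k/2<N$. Your direct induction on $\ell_{k-1}\le\max\{\ell_{-1},2N\}$ repackages the paper's one-step bound $\ell_k\le\max\{\ell_{-1},\ell_{k-1}/2,2N\}$ unrolled recursively, and your explicit checks that $x^{k+1}\in\dom F$ and of the case $\nu_i=1$ are details the paper leaves implicit.
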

\begin{proof}
(i)
One can verify the equivalence
$$
\ell \geq \left(\dfrac{1 - \nu_i}{1 + \nu_i}\dfrac{1}{\varepsilon}\right)^\frac{1 - \nu_i}{2 \nu_i} M_i^\frac{1}{\nu_i}
\iff
\ell \geq N_i(\ell):=\left( \dfrac{1 - \nu_i}{1 + \nu_i}\cdot\dfrac{\ell}{\varepsilon} \right)^\frac{1 - \nu_i}{1 + \nu_i}M_i^\frac{2}{1 + \nu_i}.
$$
Therefore, the assumption $\ell \geq N:=\max_{i\in[1:m]}\left(\dfrac{1 - \nu_i}{1 + \nu_i}\dfrac{1}{\varepsilon}\right)^\frac{1 - \nu_i}{2 \nu_i} M_i^\frac{1}{\nu_i}$ given in \eqref{eq:ell-N} implies that $\ell \geq N_i(\ell)$ holds for all $i \in [1:m]$.
By applying Lemma \ref{Lem:Nes} with $\delta := \dfrac{\varepsilon}{\ell}$, we obtain
\begin{equation*}
\begin{split}
F_i(x^{k+1}) &= f_i(x^k + d^k) + g_i(x^k + d^k)\\
&\leq f_i(x^k) + \nabla f_i(x^k)^\top d^k + \dfrac{N_i(\ell)}{2}\|d^k\|^{2} + \dfrac{\delta}{2} + g_i(x^k + d^k)\\
&= F_i(x^k) + \nabla f_i(x^k)^\top d^k + g_i(x^k + d^k) - g_i(x^k) + \dfrac{N_i(\ell)}{2}\|d^k\|^2 + \dfrac{\varepsilon}{2\ell}\\
&\leq F_i(x^k) + \nabla f_i(x^k)^\top d^k + g_i(x^k + d^k) - g_i(x^k) + \dfrac{\ell}{2}\|d^k\|^2 + \dfrac{\varepsilon}{2\ell}, 
\end{split}
\end{equation*}
for all $k\in\NN \cup \{0\}\,$ and $i\in[1:m]$. Therefore, we conclude
\begin{equation*}
\begin{split}
F_i(x^{k+1}) - F_i(x^k)&\leq \nabla f_i(x^k)^\top d^k + g_i(x^k + d^k) - g_i(x^k) + \dfrac{\ell}{2}\|d^k\|^2 + \dfrac{\varepsilon}{2\ell}\\
&\leq \left\{ \max_{i\in[1:m]}(\nabla f_i(x^k)^\top d^k + g_i(x^k + d^k) - g_i(x^k)) + \dfrac{\ell}{2}\|d^k\|^2 \right\} + \dfrac{\varepsilon}{2\ell}\\
&= \min_{d\in \RR^n}\left\{ \max_{i\in[1:m]}(\nabla f_i(x^k)^\top d + g_i(x^k + d) - g_i(x^k)) + \dfrac{\ell}{2}\|d\|^2 \right\} + \dfrac{\varepsilon}{2\ell}\\
&= - w_{\ell}(x^k) + \dfrac{\varepsilon}{2\ell}.  
\end{split}
\end{equation*}
Thus, the condition \eqref{eq:P} is satisfied. 

(ii)
To prove the second assertion, it suffices to prove the inequality
\begin{equation}\label{eq:upper1}
\ell_k \leq \max\left\{\ell_{-1},\frac{\ell_{k-1}}2,2N\right\}, \quad \forall k \geq 0,
\end{equation}
because a recursive usage of \eqref{eq:upper1} implies
$$
\ell_k \leq \max\left\{\ell_{-1},\frac{\ell_{-1}}{2^{k+1}},2N\right\} = \max\left\{\ell_{-1},2N\right\}. 
$$
At for each iteration $k \geq 0$, we consider two cases based on whether a failure of verifying the condition \eqref{eq:P} occurred in backtracking procedure. 
When the condition \eqref{eq:P} is satisfied at the first trial of the backtracking, we have
$$
\ell_k = \max\left\{1,\dfrac{\ell_{k-1}}2\right\} \leq \max\left\{\ell_{-1},\dfrac{\ell_{k-1}}2\right\} \leq \max\left\{\ell_{-1},\dfrac{\ell_{k-1}}2, 2N\right\}.
$$
On the other case when the condition \eqref{eq:P} failed at least once, the second last trial $\ell=\ell_k/2$ does not satisfy \eqref{eq:P}. Then,
 we must have $\ell_k/2 < N$ by the assertion (i) and so
$$
\ell_k < 2N \leq \max\left\{\ell_{-1},\frac{\ell_{k-1}}2,2N\right\}.
$$
Hence, we conclude \eqref{eq:upper1} which implies the assertion (ii).
\end{proof}

Now, we establish a complexity bound for Algorithm~\ref{alg1} to find an $\varepsilon$-approximate solution in terms of the measure $\ell_k w_{\ell_k}(x^k)$.

\begin{theorem}\label{Thm:noncon-comp}
In the problem \eqref{prob}, let $\{x^k\}$ and $\{\ell_k\}$ be generated by Algorithm \ref{alg1} for a given $\varepsilon>0$. For any iteration number $K$ satisfying
\begin{equation*}
K \geq
\max\left\{
2\ell_{-1}\varepsilon^{-1},
\max_{i\in[1:m]} 4\left(\dfrac{1 - \nu_i}{1 + \nu_i}\right)^\frac{1 - \nu_i}{2 \nu_i} \varepsilon^{-\frac{1 + \nu_i}{2\nu_i}} M_i^\frac{1}{\nu_i} \right\}u_0(x^0) = O\left(\varepsilon^{-\frac{1+\nu_{\min}}{2\nu_{\min}}}\right),
\end{equation*}
where $\nu_{\min}=\min_{i\in[1:m]}\nu_i$, we have
\begin{equation}\label{eq:wl-eps}
    \min_{k\in[0:K-1]}\ell_kw_{\ell_k}(x^k) \leq \varepsilon.    
\end{equation}
\end{theorem}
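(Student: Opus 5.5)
The plan is a contradiction argument that telescopes the descent condition \eqref{eq:P} over the first $K$ iterations. Suppose \eqref{eq:wl-eps} fails, i.e. $\ell_k w_{\ell_k}(x^k) > \varepsilon$ for all $k\in[0:K-1]$. Then $\varepsilon/(2\ell_k) < w_{\ell_k}(x^k)/2$, and \eqref{eq:P} (which holds at $\ell=\ell_k$ by Lemma~\ref{Lem:noncon}(i)) yields, for every $i\in[1:m]$,
\[
F_i(x^{k+1}) - F_i(x^k) \leq -w_{\ell_k}(x^k) + \frac{\varepsilon}{2\ell_k} < -\frac{1}{2}w_{\ell_k}(x^k) < -\frac{\varepsilon}{2\ell_k}.
\]

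Next, Lemma~\ref{Lem:noncon}(ii) gives $\ell_k \leq \max\{\ell_{-1},2N\}=:\bar\ell$, so each iteration decreases every $F_i$ by more than $\varepsilon/(2\bar\ell)$. Summing over $k=0,\dots,K-1$ gives $F_i(x^K) < F_i(x^0) - K\varepsilon/(2\bar\ell)$ for all $i$. Hence
\[
\min_{i}\bigl(F_i(x^0)-F_i(x^K)\bigr) > \frac{K\varepsilon}{2\bar\ell}.
\]
The definition of $u_0$ as a supremum over $y$, evaluated at $y=x^K$, then gives $u_0(x^0) > K\varepsilon/(2\bar\ell)$, i.e. $K < 2\bar\ell\,\varepsilon^{-1}u_0(x^0)$.

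It remains to check that $2\bar\ell\varepsilon^{-1} = \max\{2\ell_{-1}\varepsilon^{-1}, 4N\varepsilon^{-1}\}$ matches the constant in the theorem. Since
\[
N\varepsilon^{-1}=\max_i \left(\frac{1-\nu_i}{1+\nu_i}\right)^{\frac{1-\nu_i}{2\nu_i}}\varepsilon^{-\frac{1-\nu_i}{2\nu_i}-1}M_i^{1/\nu_i}
\]
and $\frac{1-\nu_i}{2\nu_i}+1=\frac{1+\nu_i}{2\nu_i}$, this is exactly the bracketed maximum. So the assumed lower bound on $K$ contradicts $K<2\bar\ell\varepsilon^{-1}u_0(x^0)$, which proves \eqref{eq:wl-eps}.

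Two points need care.

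First, if $u_0(x^0)=\infty$ the statement is vacuous. If $u_0(x^0)=0$, the hypothesis allows $K=0$, where the minimum over an empty set is ill-defined; I would note $K\ge1$ is implicitly required. For $K\ge1$ and $u_0(x^0)=0$, the strict inequality $u_0(x^0)>K\varepsilon/(2\bar\ell)>0$ is already a contradiction.

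Second, the strictness must survive the summation. Because each per-step decrease is strict, the final inequality $u_0(x^0) > K\varepsilon/(2\bar\ell)$ is strict, and it contradicts $K \geq 2\bar\ell\varepsilon^{-1}u_0(x^0)$ even when equality holds there. This strictness bookkeeping is the main (mild) obstacle; everything else is direct substitution of Lemma~\ref{Lem:noncon}.
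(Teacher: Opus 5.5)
Your proof is correct and follows the paper's argument essentially line by line. Both arguments assume $\ell_k w_{\ell_k}(x^k)>\varepsilon$ for all $k$, derive the strict per-step decrease $F_i(x^{k+1})-F_i(x^k)<-\varepsilon/(2\ell_k)$ from \eqref{eq:P}, sum using $\ell_k\le\max\{\ell_{-1},2N\}$ from Lemma~\ref{Lem:noncon}(ii), and bound $\min_i(F_i(x^0)-F_i(x^K))$ by $u_0(x^0)$. The differences are cosmetic: you apply $\ell_k\le\bar\ell$ at each step rather than to $\sum_k 1/\ell_k$ after summing, and you add remarks on the degenerate cases $K=0$ and $u_0(x^0)=\infty$, which the paper leaves implicit.
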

\begin{proof}
We show the contraposition of the statement. 
Suppose that \eqref{eq:wl-eps} remains unfulfilled, i.e.,
\begin{equation}
\ell_kw_{\ell_k}(x^k) > \varepsilon,\quad k\in[0,K-1].
\end{equation}
Then, the condition \eqref{eq:P} implies
\begin{equation}\label{eq:obj-descent}
    F_i(x^{k+1}) - F_i(x^k) \leq - w_{\ell_k}(x^k) + \dfrac{\varepsilon}{2\ell_{k}} < - \dfrac{\varepsilon}{2\ell_{k}}
\end{equation}
for all $i\in[1:m]$ and $k\in[0,K-1]$. 
Adding both sides for $k=0,\ldots,K-1$, we get
$$F_i(x^K) - F_i(x^0) < - \dfrac{\varepsilon}{2} \sum_{k=0}^{K-1}\dfrac{1}{\ell_k}. $$
Now, using $\ell_k \leq \max\{\ell_{-1},2N\}$ from Lemma \ref{Lem:noncon}, we have
\begin{align*}
\dfrac{\varepsilon}{2}
&< \dfrac{F_i(x^0) - F_i(x^K)}{\displaystyle\sum_{k=0}^{K-1}\dfrac{1}{\ell_k}}
\leq \dfrac{\max\left\{\ell_{-1},2N\right\}\{F_i(x^0) - F_i(x^K)\}}K. 
\end{align*}
Since this inequality holds for any $i$, it follows that
\begin{align}
\dfrac{\varepsilon}{2}
&< \dfrac{\max\left\{\ell_{-1},2N\right\}\displaystyle\min_{j\in[1:m]}\{F_j(x^0) - F_j(x^K)\}}K\\
&\leq \dfrac{\max\left\{\ell_{-1},2N\right\}\displaystyle\sup_{x\in\RR^n}\min_{j\in[1:m]}\{F_j(x^0) - F_j(x)\}}K\\
&= \dfrac{\max\left\{\ell_{-1},2N\right\}u_0(x^0)}K. \label{eq:proof-nonconv-K-bound}
\end{align}
This can be arranged as follows.
\begin{equation}
K < \dfrac{2\cdot\max\{\ell_{-1},2N\}\displaystyle u_0(x^0)}{\varepsilon} =\max\left\{
2\ell_{-1}\varepsilon^{-1},
\max_{i\in[1:m]} 4\left(\dfrac{1 -  \nu_i}{1 + \nu_i}\right)^\frac{1 - \nu_i}{2 \nu_i} \varepsilon^{-\frac{1 + \nu_i}{2\nu_i}} M_i^\frac{1}{\nu_i} \right\}u_0(x^0).
\end{equation}
\end{proof}
\subsection{Complexity analysis in convex case} \label{sec:con}
Next, we show that the iteration complexity of the proposed PGM (Algorithm~\ref{alg1}) can be improved in the case where each component $f_i$ is a \emph{convex} function.

Before stating the main result, we prepare the following property for the convergence analysis.

\begin{lemma}\label{Lem:conv2}
Assume that each $f_i$ is convex in problem \eqref{prob}.
Then, the sequence $\{x^k\}$ generated by Algorithm~\ref{alg1} satisfies
\begin{equation}
\min_{i\in[1:m]}\left(F_i(x^{k+1}) - F_i(x)\right) \leq \dfrac{\ell_k}{2}\left(\|x^k - x\|^2 - \|x^{k+1} - x\|^2\right) + \dfrac{\varepsilon}{2\ell_k},\quad \forall x\in \dom F. 
\end{equation}
\end{lemma}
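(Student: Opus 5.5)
The plan is the standard three-point argument for proximal gradient methods, using condition \eqref{eq:P} in place of a descent lemma. Fix $x\in\dom F$ and write $\ell=\ell_k$, $d^k=d_\ell(x^k)$. The proof will combine three ingredients: the accepted condition \eqref{eq:P}, the optimality of $d^k$ for the $\ell$-strongly convex subproblem \eqref{compute:dl}, and the convexity of the $f_i$ and $g_i$.

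\emph{Step 1 (start from \eqref{eq:P}).} Since \eqref{eq:P} holds at the accepted $\ell$, for every $i$ we have
$$F_i(x^{k+1})\le F_i(x^k)-w_\ell(x^k)+\tfrac{\varepsilon}{2\ell}.$$
By definition,
$$-w_\ell(x^k)=\phi(d^k),\qquad \phi(d):=\max_{j}\{\nabla f_j(x^k)^\top d+g_j(x^k+d)-g_j(x^k)\}+\tfrac{\ell}{2}\|d\|^2 .$$

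\emph{Step 2 (compare $\phi(d^k)$ with $\phi(x-x^k)$).} The function $\phi$ is $\ell$-strongly convex and $d^k$ minimizes it. As in Lemma~\ref{lem:termination} (via \cite[Theorem~5.25]{beck2017first}), this gives
$$\phi(d^k)\le \phi(x-x^k)-\tfrac{\ell}{2}\|x-x^k-d^k\|^2=\phi(x-x^k)-\tfrac{\ell}{2}\|x^{k+1}-x\|^2 .$$
Next, I bound $\phi(x-x^k)$. For each $j$, convexity of $f_j$ gives $f_j(x^k)+\nabla f_j(x^k)^\top(x-x^k)\le f_j(x)$. Hence
$$\nabla f_j(x^k)^\top(x-x^k)+g_j(x)-g_j(x^k)\le F_j(x)-F_j(x^k),$$
and therefore
$$\phi(x-x^k)\le \max_j\{F_j(x)-F_j(x^k)\}+\tfrac{\ell}{2}\|x-x^k\|^2 .$$

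\emph{Step 3 (combine; the main obstacle).} The difficulty is the mismatch between the max over $j$ in $\phi$ and the min over $i$ in the claim. To handle it, choose $i$ to be the index attaining $\max_j\{F_j(x)-F_j(x^k)\}$, so that $F_i(x^k)+\max_j\{F_j(x)-F_j(x^k)\}=F_i(x)$. Applying Step 1 with this particular $i$ and inserting Step 2 yields
$$F_i(x^{k+1})\le F_i(x)+\tfrac{\ell}{2}\bigl(\|x^k-x\|^2-\|x^{k+1}-x\|^2\bigr)+\tfrac{\varepsilon}{2\ell}.$$
Since $\min_{i}(F_i(x^{k+1})-F_i(x))$ is at most the value at this chosen index, the claimed inequality follows.

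The only subtle points are these: \eqref{eq:P} must be used for a \emph{specific} index chosen after $x$ is fixed, which is legitimate because \eqref{eq:P} holds for all $i$; and the strong-convexity inequality must be applied to the full max-type subproblem. Neither point requires the Hölder property beyond what is already encoded in \eqref{eq:P}.
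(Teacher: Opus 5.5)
Your proof is correct and follows essentially the same route as the paper: it combines \eqref{eq:P}, the convexity of the $f_i$, and the $\ell_k$-strong convexity inequality $\phi_k(d^k)\le\phi_k(x-x^k)-\frac{\ell_k}{2}\|x-x^{k+1}\|^2$. The only cosmetic difference is how the max/min mismatch is handled. The paper takes $\min_i$ of the per-index linearization bound, so that $-\max_i$ of the linearized terms reproduces $\phi_k(x-x^k)$. You instead first bound $\phi(x-x^k)$ by $\max_j\{F_j(x)-F_j(x^k)\}+\frac{\ell}{2}\|x-x^k\|^2$ and then pick the maximizing index.
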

\begin{proof}
We define an $\ell_k$-strongly convex function by
$$
\phi_k(d):=\max_{i \in [1:m]} \left[ \nabla f_i(x^k)^\top d + g_i(x^k+d) - g_i(x^k) \right] + \frac{\ell_k}{2}\|d\|^2,
$$
whose minimizer is $d^k=x^{k+1}-x^k$ and whose minimum is given by $\min_{d\in\RR^n}\phi_k(d) = - w_{\ell_k}(x^k)$ in Algorithm \ref{alg1}.
For the generated sequence $\{x^k\}$ by Algorithm \ref{alg1}, the condition \eqref{eq:P} implies
\begin{equation*}
F_i(x^{k+1}) - F_i(x^k)\leq  - w_{\ell_k}(x^k) + \dfrac{\varepsilon}{2\ell_{k}} = \min_{d \in \RR^n} \phi_k(d) + \frac{\varepsilon}{2\ell_k}.
\end{equation*}
From this and the convexity of $f_i$, we have for all $x\in\dom F$ that 
\begin{equation*}
\begin{split}
F_i(x^{k+1}) - F_i(x) &= F_i(x^{k+1}) - F_i(x^k) + F_i(x^{k}) - F_i(x)\\
& \leq  \min_{d\in \RR^n}\phi_k(d)- \left[\nabla f_i(x^k)^\top (x - x^{k}) + g_i(x) - g_i(x^{k}) + \dfrac{\ell_k}{2}\|x - x^k\|^2\right] + \dfrac{\ell_k}{2}\|x - x^k\|^2 + \dfrac{\varepsilon}{2\ell_k}.
\end{split}
\end{equation*}
Taking $\min_{i \in [1:m]}$ on both sides, we have
\begin{align*}
\min_{i \in [1:m]}(F_i(x^{k+1}) - F_i(x))
 &\leq \min_{d\in \RR^n}\phi_k(d)-\phi_k(x-x^k) + \dfrac{\ell_k}{2}\|x - x^k\|^2 + \dfrac{\varepsilon}{2\ell_k}\\
 &\leq \left[\phi_k(x-x^k)-\frac{\ell_k}{2}\|(x-x^k)-d^k\|^2\right] -\phi_k(x-x^k)+ \dfrac{\ell_k}{2}\|x - x^k\|^2 + \dfrac{\varepsilon}{2\ell_k} \\
 &=\dfrac{\ell_k}{2}\|x - x^k\|^2 -\frac{\ell_k}{2}\|x-x^{k+1}\|^2 + \dfrac{\varepsilon}{2\ell_k},
\end{align*}
where the second inequality follows from the $\ell_k$-strong convexity of $\phi_k(\cdot)$ with its minimizer $d^k$ (see \cite[Theorem~5.25]{beck2017first}).
\end{proof}

As is imposed in the previous study \cite{tanabe2023convergence}, we assume the following conditions for the complexity analysis when each  $f_i$ is a convex function.

\begin{assumption}\label{Asm:conv-ana}
Let $X^\ast$ be the set of weakly Pareto optimal points of $F$, and for $\alpha\in \RR^m$, let \\
$\Omega_F(\alpha):= \{x\in \RR^n~|~F(x) \leq \alpha\}$ denote the level set.  We assume the following conditions.
\begin{enumerate}[(i)]
\item \label{Asm:2.1.1}
For all $x\in \Omega_F(F(x^0))$, there exists $x^\ast\in X^\ast$ such that $F(x^\ast) \leq F(x)$. 
\item \label{Asm:2.1.2}
$\displaystyle R := \sup_{F^\ast\in F(X^\ast \cap \Omega_F(F(x^0)))}\inf_{x\in F^{-1}(\{F^\ast\})}\|x - x^0\|^2 < \infty. $
\end{enumerate}
\end{assumption}

The constant $R$ is a generalization of the distance from $x^0$ to the optimal solution set $X^*$ in the single objective case ($m=1$).

Now we are ready to establish the main result for the convex case.

\begin{theorem}\label{Thm:conv-comp}
In the problem \eqref{prob}, assume that each $f_i$ is convex.
Let $\{x^k\}$ and $\{\ell_k\}$ be generated by Algorithm \ref{alg1} for a given $\varepsilon>0$.
Furthermore, let Assumption~\ref{Asm:conv-ana} hold. 
If the number of iterations $2K$ of Algorithm~\ref{alg1} satisfies
\[2K \geq  \max\left\{\ell_{-1},2N\right\}
\max\left\{4,\dfrac{2\sqrt{2R}}{\sqrt\varepsilon}\right\}= O\left(\varepsilon^{-\frac{1}{2\nu_{\min}}}\right),
\]
where $N$ is defined in \eqref{eq:ell-N} and $\nu_{\min}=\min_{i\in[1:m]}\nu_i$, then we have
\begin{equation}\label{eq:wl-eps-conv}
    \min_{k\in[0:2K-1]}\ell_kw_{\ell_k}(x^k) \leq \varepsilon.    
\end{equation}
\end{theorem}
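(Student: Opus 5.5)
We split the $2K$ iterations into two halves. In the first half we use Lemma~\ref{Lem:conv2} to drive $u_0$ down. In the second half we use the descent argument from the proof of Theorem~\ref{Thm:noncon-comp}, now started from the improved value $u_0(x^K)$ instead of $u_0(x^0)$. Throughout, write $L:=\max\{\ell_{-1},2N\}$, so $1\le \ell_k\le L$ for all $k$ by Lemma~\ref{Lem:noncon}(ii). We argue by contraposition and assume $\ell_k w_{\ell_k}(x^k)>\varepsilon$ for all $k\in[0:2K-1]$.

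\emph{Setup.} By \eqref{eq:obj-descent}, every $F_i$ strictly decreases along the iterates. Hence $x^k\in\Omega_F(F(x^0))$ for all $k$, and $u_0(x^{k+1})\le u_0(x^k)$.

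\emph{First half: bounding $u_0(x^K)$.} Divide the inequality of Lemma~\ref{Lem:conv2} by $\ell_k$:
\[
\frac{1}{\ell_k}\min_i\bigl(F_i(x^{k+1})-F_i(x)\bigr)\le \tfrac12\bigl(\|x^k-x\|^2-\|x^{k+1}-x\|^2\bigr)+\frac{\varepsilon}{2\ell_k^2}.
\]
By monotonicity, $\min_i(F_i(x^{K})-F_i(x))\le \min_i(F_i(x^{k+1})-F_i(x))$ for $k\le K-1$, since every $F_i(x^{k+1})\ge F_i(x^K)$. Summing over $k=0,\dots,K-1$ and using $\ell_k\le L$ and $1/\ell_k\le 1$ gives
\[
\frac{K}{L}\min_i\bigl(F_i(x^K)-F_i(x)\bigr)\le \tfrac12\|x^0-x\|^2+\frac{\varepsilon}{2}\sum_{k=0}^{K-1}\frac{1}{\ell_k^2}.
\]
The error term looks dangerous: naively it is of order $K\varepsilon$, so dividing by $K/L$ leaves a term $L\varepsilon/2$. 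The way around this is to keep the weight $1/\ell_k$ in the sum rather than bounding it by $1$. Set $S:=\sum_{k<K}1/\ell_k\ge K/L$. The error is $\frac{\varepsilon}{2}\sum 1/\ell_k^2\le \frac{\varepsilon}{2}S$ because $\ell_k\ge1$. This yields
\[
\min_i\bigl(F_i(x^K)-F_i(x)\bigr)\le \frac{\|x^0-x\|^2}{2S}+\frac{\varepsilon}{2}.
\]

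Next we pass from this to $u_0(x^K)$ via Assumption~\ref{Asm:conv-ana}. For any $y$, part (i) gives $x^*\in X^*$ with $F(x^*)\le F(x^K)$; such an $x^*$ lies in $\Omega_F(F(x^0))$. By part (ii), we can choose $x$ with $F(x)=F(x^*)$ and $\|x-x^0\|^2\le R+\eta$ for arbitrary $\eta>0$. Since $x^*$ is weakly Pareto optimal, $\min_i(F_i(x^*)-F_i(y))\le 0$. Combining these, $\min_i(F_i(x^K)-F_i(y))\le \min_i(F_i(x^K)-F_i(x))$, following Tanabe et al. Hence
\[
u_0(x^K)\le \frac{LR}{2K}+\frac{\varepsilon}{2}.
\]

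\emph{Second half.} Repeat the telescoping argument of Theorem~\ref{Thm:noncon-comp} over $k=K,\dots,2K-1$, ending at \eqref{eq:proof-nonconv-K-bound} with $x^0$ replaced by $x^K$. This gives $\frac{\varepsilon}{2}\cdot\frac{K}{L}<u_0(x^K)$, and therefore
\[
\frac{\varepsilon K}{2L}<\frac{LR}{2K}+\frac{\varepsilon}{2}.
\]
The hypothesis $2K\ge 4L$ gives $\frac{\varepsilon}{2}\le \frac{\varepsilon K}{4L}$. Substituting, we get $\frac{\varepsilon K}{4L}<\frac{LR}{2K}$, i.e. $K^2<2L^2R/\varepsilon$. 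This contradicts $2K\ge 2L\sqrt{2R/\varepsilon}$, which is exactly the assumed bound.

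\emph{Main obstacle.} The delicate step is the first half. We must make sure the $\varepsilon/(2\ell_k)$ error terms accumulate only to an $O(\varepsilon)$ contribution to $u_0(x^K)$, which is what the weighting by $1/\ell_k$ achieves. We must also justify replacing an arbitrary $y$ in $u_0$ by a point $x$ near $x^0$ through Assumption~\ref{Asm:conv-ana}. If the constants do not match exactly, the factors $4$ and $2\sqrt{2R}$ in the statement suggest the same split with slightly different bookkeeping.
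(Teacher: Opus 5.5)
Your architecture matches the paper's. You use the same split into two halves and the same $1/\ell_k$-weighted telescoping of Lemma~\ref{Lem:conv2}, giving $\min_i(F_i(x^K)-F_i(x))\le \|x^0-x\|^2/(2S)+\varepsilon/2$. You restart the argument of Theorem~\ref{Thm:noncon-comp} from $x^K$ in the same way, and your final arithmetic reproduces the constants $4$ and $2\sqrt{2R}$ exactly. The gap is in converting the first-half bound into $u_0(x^K)\le \frac{LR}{2K}+\frac{\varepsilon}{2}$. You apply Assumption~\ref{Asm:conv-ana}(i) at $x^K$ to get $x^*\in X^*$ with $F(x^*)\le F(x^K)$. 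You then claim that weak Pareto optimality of $x^*$, i.e.\ $\min_i(F_i(x^*)-F_i(y))\le 0$, yields $\min_i(F_i(x^K)-F_i(y))\le \min_i(F_i(x^K)-F_i(x^*))$. This does not follow. Weak optimality only provides \emph{some} index $j$ with $F_j(y)\ge F_j(x^*)$, hence $\min_i(F_i(x^K)-F_i(y))\le F_j(x^K)-F_j(x^*)$, and this can exceed the minimum over $i$. For instance, take $F_1(x)=x^2$, $F_2(x)=(x-2)^2$ on $\RR$ (so $X^*=[0,2]$), with $x^K=3$, $x^*=\tfrac32$, $y=2$. All your premises hold ($F(x^*)=(\tfrac94,\tfrac14)\le(9,1)=F(x^K)$ and $\min_i(F_i(x^*)-F_i(y))=-\tfrac74$). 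Yet $\min_i(F_i(x^K)-F_i(y))=1>\tfrac34=\min_i(F_i(x^K)-F_i(x^*))$.

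The missing idea is to apply Assumption~\ref{Asm:conv-ana}(i) to $y$ rather than to $x^K$.

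\begin{itemize}
\item If $\min_i(F_i(x^K)-F_i(y))\le 0$, there is nothing to show, since the right-hand side of your bound is positive.
\item Otherwise $F(y)<F(x^K)\le F(x^0)$, so $y\in\Omega_F(F(x^0))$, and (i) yields $x^*\in X^*$ with $F(x^*)\le F(y)$. Then $x^*\in X^*\cap\Omega_F(F(x^0))$.
\item Let $x$ be the point furnished by (ii) with $F(x)=F(x^*)$ and $\|x-x^0\|^2\le R+\eta$. Then \emph{componentwise} $F_i(x^K)-F_i(y)\le F_i(x^K)-F_i(x^*)=F_i(x^K)-F_i(x)$.
\item Your first-half estimate then gives $\min_i(F_i(x^K)-F_i(y))\le \frac{L(R+\eta)}{2K}+\frac{\varepsilon}{2}$. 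Let $\eta\downarrow 0$ and take $\sup_y$.
\end{itemize}

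This is precisely the paper's chain of equalities for the supremum of $\min_i(F_i(x^K)-F_i(\cdot))$ over $\RR^n$, $\Omega_F(F(x^K))$, $\Omega_F(F(x^0))$, and $X^*\cap\Omega_F(F(x^0))$. Weak optimality of $x^*$ enters only through $x^*\in X^*$, which makes (ii) applicable. With this repair the rest of your argument goes through unchanged.
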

\begin{proof}
We show the contraposition of the statement. 
Suppose that \eqref{eq:wl-eps-conv} remains unfulfilled, that is,
\begin{equation}
\ell_kw_{\ell_k}(x^k) > \varepsilon,\quad k\in[0,2K-1].
\end{equation}
Since the last iterate $x^{2K}$ is obtained by running $K$ iterations of Algorithm~\ref{alg1} with the initial point $x^K$ and the initial step parameter $\ell_{K-1}$, 
we have from (the contraposition of) Theorem~\ref{Thm:noncon-comp} that (see \eqref{eq:proof-nonconv-K-bound})
\begin{equation}\label{inequ:eps}
\frac{\varepsilon}{2} < \frac{\max\{\ell_{K-1},2N\}u_0(x^K)}{K} \leq \frac{\max\{\ell_{-1},2N\}u_0(x^K)}{K},
\end{equation}
where the last inequality follows by Lemma~\ref{Lem:noncon}(ii).

Next, we deal with an upper bound of $u_0(x^K)$.
Using the descent property \eqref{eq:obj-descent} and Lemma~\ref{Lem:conv2}, for all $k\in [0:K-1]$ and $x\in\dom F$, we have
\begin{equation*}
\min_{i\in[1:m]}\left(F_i(x^{K}) - F_i(x)\right)\leq
\min_{i\in[1:m]}\left(F_i(x^{k+1}) - F_i(x)\right) \leq \dfrac{\ell_k}{2}\left(\|x^k - x\|^2 - \|x^{k+1} - x\|^2\right) + \dfrac{\varepsilon}{2\ell_k}. 
\end{equation*}
Dividing by $\ell_k$ and adding both sides for $k \in [0:K-1]$, we obtain
\begin{equation*}
\begin{split}
\min_{i\in[1:m]}\left(F_i(x^{K}) - F_i(x)\right)\sum_{k=0}^{K-1}\dfrac{1}{\ell_k}
&\leq \dfrac{1}{2}\left(\|x^0 - x\|^2 - \|x^{K} - x\|^2\right) + \dfrac{\varepsilon}{2}\sum_{k=0}^{K-1}\dfrac{1}{\ell_k^2}\\
&\leq \dfrac{1}{2}\|x^0 - x\|^2 + \dfrac{\varepsilon}{2}\sum_{k=0}^{K-1}\dfrac{1}{\ell_k^2}.
\end{split}
\end{equation*}
Therefore, combining with Lemma~\ref{Lem:noncon}(ii), it follows that (recall $\ell_k\geq 1$)
\begin{equation*}
\begin{split}
\min_{i\in[1:m]}\left(F_i(x^{K}) - F_i(x)\right)
&\leq \dfrac{1}{2\displaystyle \sum_{k=0}^{K-1}\dfrac{1}{\ell_k}}\|x^0 - x\|^2 + \dfrac{\varepsilon}{2}
\cdot \dfrac{\displaystyle \sum_{k=0}^{K-1}\dfrac{1}{\ell_k^2}}{\displaystyle \sum_{k=0}^{K-1}\dfrac{1}{\ell_k}}
\leq \dfrac{\max\left\{\dfrac{\ell_{-1}}{2},N\right\}}{K}\|x^0 - x\|^2 + \dfrac{\varepsilon}{2}. 
\end{split}
\end{equation*}
Now, we take $\displaystyle \sup_{F^\ast\in F(X^\ast \cap \Omega_F(F(x^0)))}\inf_{x\in F^{-1}(\{F^\ast\})}$ on both sides.
Using the constant $R$ defined in Assumption~\ref{Asm:conv-ana}, we obtain
\begin{equation}\label{inequ:conv}
\sup_{F^\ast\in F(X^\ast \cap \Omega_F(F(x^0)))}\inf_{x\in F^{-1}(\{F^\ast\})}
\min_{i\in[1:m]}\left(F_i(x^{K}) - F_i(x)\right)
\leq \dfrac{\max\left\{\dfrac{\ell_{-1}}{2},N\right\}}{K}R + \dfrac{\varepsilon}{2}.
\end{equation}
We claim that the left hand side coincides with $u_0(x^K)$. In fact, we have
\begin{equation*}
\begin{split}
\sup_{F^\ast\in F(X^\ast \cap \Omega_F(F(x^0)))}\inf_{x\in F^{-1}(\{F^\ast\})}\min_{i\in[1:m]}\left(F_i(x^{K}) - F_i(x)\right)
&=\sup_{F^\ast\in F(X^\ast \cap \Omega_F(F(x^0)))}\min_{i\in[1:m]}\left(F_i(x^{K}) - F_i^*\right)\\
&= \sup_{x\in X^\ast \cap\Omega_F(F(x^0))}\min_{i\in[1:m]}\left(F_i(x^{K}) - F_i(x)\right)\\
&= \sup_{x\in \Omega_F(F(x^0))}\min_{i\in[1:m]}\left(F_i(x^{K}) - F_i(x)\right),
\end{split}
\end{equation*}
where the last equality follows from Assumption \ref{Asm:conv-ana}~(i). Since $\min_{i\in[1:m]}\left(F_i(x^{K}) - F_i(x)\right) \geq 0$ if and only if $x \in \Omega_F(F(x^K))$, and the descent property \eqref{eq:obj-descent} implies $\Omega_F(F(x^K))\subset \Omega_F(F(x^0))$, the last expression can be arranged as
\begin{align*}
\sup_{x\in \Omega_F(F(x^0))}\min_{i\in[1:m]}\left(F_i(x^{K}) - F_i(x)\right)
&= \sup_{x\in \Omega_F(F(x^K))}\min_{i\in[1:m]}\left(F_i(x^{K}) - F_i(x)\right)\\
&= \sup_{x\in \RR^n}\min_{i\in[1:m]}\left(F_i(x^{K}) - F_i(x)\right)\\
&= u_0(x^K).
\end{align*}
This verifies the claim and so the inequality \eqref{inequ:conv} becomes
$$u_0(x^K) \leq \dfrac{R\cdot\max\left\{\dfrac{\ell_{-1}}{2},N\right\}}{K} + \dfrac{\varepsilon}{2}.$$
Finally, incorporating this into the inequality \eqref{inequ:eps}, we conclude that
\begin{align}
\varepsilon 
&< \frac{2\max\{\ell_{-1},2N\}u_0(x^K)}{K} \leq \frac{R[\max\{\ell_{-1},2N\}]^2}{K^2} 
+ \frac{\varepsilon\max\{\ell_{-1},2N\}}{K}
\\
&\leq 2\max\{\ell_{-1},2N\}\cdot\max\left\{\frac{R\cdot\max\{\ell_{-1},2N\}}{K^2} 
, \frac{\varepsilon}{K}\right\}.
\end{align}
The number of iterations $2K$ satisfying this inequality has the following upper bound:
\begin{align*}
2K &< \max\left\{\ell_{-1},2N\right\}
\max\left\{4,\dfrac{2\sqrt{2R}}{\sqrt\varepsilon}\right\}= O(\varepsilon^{-\frac{1}{2\nu_{\min}}}).
\end{align*}
The proof of the theorem is completed.
\end{proof}

\subsection{Comparison with related methods}\label{sec:comparison}
We compare the iteration complexity results (Theorems~\ref{Thm:noncon-comp} and \ref{Thm:conv-comp}) of the proposed method with related ones \cite{tanabe2023convergence,pinheiro2025universal,Amaral2025derivativefree} as summarized in Table~\ref{table:works}. 
For the H\"older exponent $\nu_i$ of each $\nabla f_i$, we denote 
$\nu_{\min}:= \min_{i\in [1:m]}\nu_i$.
In both the non-convex and convex cases, when $f_i$ is weakly smooth, $\nu_{\min}$ affects the dominant factor in the iteration complexity of the proposed method. From this, it can be inferred that the iteration complexity of the proposed method is influenced by the component of the gradient of each $f_i$ that is furthest from Lipschitz continuity.

\begin{table}[htbp]
\begin{center}
\caption{Relation between the proposed method and previous studies on PGMs for multiobjective optimization.}\label{table:works}
\scalebox{0.9}{
\begin{tabular}{|r||l|l|l|p{7em}|p{5em}|} \hline 
  Algorithm &Convexity & Assumption of $\nabla f_i$& Optimality measure & Iteration complexity & Parameter-Free\\ \hline\hline
  Tanabe et al. \cite{tanabe2023convergence} & non-convex & Lipschitz continuity&$\displaystyle w_{1}(x^k) \leq \varepsilon $ & $O\left(\varepsilon^{-1}\right)$ & $\times$\\
  {}Pinheiro and Grapiglia  \cite{pinheiro2025universal} & non-convex & H\"older continuity& $\displaystyle \|d_{1}(x^k)\|^2 \leq \varepsilon$ & $O\left(\varepsilon^{-\frac{1 + \nu_{\operatorname{min}}}{2\nu_{\operatorname{min}}}}\right)$ & $\checkmark$\\ 
  {}Amaral et al. \cite{Amaral2025derivativefree} & non-convex & H\"older continuity& $\ell_k^2\|d_{\ell_k}(x^k)\|^2\leq \varepsilon$ & $O\left(\varepsilon^{-\frac{1 + \nu_{\operatorname{min}}}{2\nu_{\operatorname{min}}}}\right)$ & $\checkmark$\\ 
   Ours (Algorithm \ref{alg1}) & non-convex & H\"older continuity & $\displaystyle \ell_kw_{\ell_k}(x^k) \leq \varepsilon $ & $O\left(\varepsilon^{-\frac{1 + \nu_{\operatorname{min}}}{2\nu_{\operatorname{min}}}}\right)$ & $\checkmark$\\
{}Tanabe et al. \cite{tanabe2023convergence} & convex & Lipschitz continuity & $u_0(x^k) \leq \varepsilon$ & $O\left(\varepsilon^{-1}\right)$ & $\times$\\
   Ours (Algorithm \ref{alg1}) & convex  & H\"older continuity& $\ell_kw_{\ell_k}(x^k) \leq \varepsilon$ & 
   $O\left(\varepsilon^{-\frac{1}{2\nu_{\min}}}\right)$ & $\checkmark$\\ \hline
 \end{tabular}
}
\end{center}
\end{table}

Here, we give some discussions regarding the comparison in Table~\ref{table:works}.

\begin{itemize}
\item
The previous works \cite{pinheiro2025universal} and \cite{Amaral2025derivativefree} originally analyzed 
the iteration complexity to achieve $\|d_{1}(x^k)\|\leq \varepsilon$
and $\ell_k\|d_{\ell_k}(x^k)\|\leq \varepsilon$, respectively. 
For fair comparison with our method, Table~\ref{table:works} shows the iteration complexity using the measures $\|d_{1}(x^k)\|^2\leq \varepsilon$ 
and $\ell_k^2\|d_{\ell_k}(x^k)\|^2\leq \varepsilon$, 
in view of the implication from the condition 
$\ell_kw_{\ell_k}(x^k) \leq \varepsilon$ (see \eqref{eq:termination}).
\item As can be seen from Table~\ref{table:works}, the non-convex case achieved the same iteration complexity as previous studies \cite{pinheiro2025universal,Amaral2025derivativefree}.
This result, which guarantees $\ell_k w_{\ell_k}(x^k)\leq \varepsilon$ with an iteration complexity of $O\left(\varepsilon^{-\frac{1 + \nu_{\operatorname{min}}}{2\nu_{\operatorname{min}}}}\right)$, also encompasses the one $O(\varepsilon^{-1})$ shown by \cite{tanabe2023convergence} when each $\nabla f_i$ is Lipschitz continuous.
\item The iteration complexity shown in Table~\ref{table:works} for the convex case is a novel result of this study.
This result guarantees $\ell_k w_{\ell_k}(x^k) \leq \varepsilon$ with an iteration complexity of $O\left(\varepsilon^{-\frac{1}{2\nu_{\operatorname{min}}}}\right)$, which improves the one in the non-convex case. 
While Tanabe et al. \cite{tanabe2023convergence} provided a complexity guarantee for the merit function $u_0(x^k)$, this measure is hard to verify as a stopping criterion in general. In contrast, the measure $\ell_k w_{\ell_k}(x^k)$ is computable at each iteration, so providing its complexity analysis is important from practical aspect.
\end{itemize}

\section{Conclusion}\label{sec:conclusion}

In this paper, we demonstrated parameter-free proximal gradient methods for multiobjective optimization problems with weakly smooth components.
In the non-convex setting, an iteration complexity was obtained consistent to existing results and the convergence analysis is performed in terms of the criterion $\ell_k w_{\ell_k}(x^k) \leq \varepsilon$ which is coercer than the ones in previous works \cite{Amaral2025derivativefree,pinheiro2025universal,tanabe2023convergence}.
In the convex setting, we established a novel complexity result for weakly smooth problems, improving the complexity guarantee in non-convex case. 

Although the proposed method ensures to obtain an $\varepsilon$-approximate solution, the tolerance parameter $\varepsilon>0$ is fixed as an input of the algorithm, from which a global convergence property of the merit function is not guaranteed. This limitation is also seen in the work \cite{Amaral2025derivativefree}, and even in the work \cite{nesterov2015universal} in a single objective and convex case, while \cite{pinheiro2025universal} relaxes it for constrained problems. It would be valuable to develop PGMs with convergence guarantee of the merit function.

\subsection*{Acknowledgments}
The second author was supported partly by the JSPS KAKENHI Grant Number 25K15010.
The third author was supported partly by the JSPS KAKENHI Grant Numbers 25K21158 and 26K02871.

\subsection*{Data availability}
There is no data associated with this paper. 

\bibliography{reference.bib}

@book{beck2017first,
  title={First-order methods in optimization},
  author={Beck, Amir},
  year={2017},
  publisher={SIAM}
}

@article{evans1984overview,
  title={An overview of techniques for solving multiobjective mathematical programs},
  author={Evans, Gerald W},
  journal={Management Science},
  volume={30},
  number={11},
  pages={1268--1282},
  year={1984},
  publisher={INFORMS}
}

@article{fliege2000steepest,
  title={Steepest descent methods for multicriteria optimization},
  author={Fliege, J{\"o}rg and Svaiter, Benar Fux},
  journal={Mathematical methods of operations research},
  volume={51},
  number={3},
  pages={479--494},
  year={2000},
  publisher={Springer}
}

@article{marler2004survey,
  title={Survey of multi-objective optimization methods for engineering},
  author={Marler, R Timothy and Arora, Jasbir S},
  journal={Structural and multidisciplinary optimization},
  volume={26},
  number={6},
  pages={369--395},
  year={2004},
  publisher={Springer}
}

@article{drummond2004projected,
  title={A projected gradient method for vector optimization problems},
  author={Drummond, LM Grana and Iusem, Alfredo N},
  journal={Computational Optimization and applications},
  volume={28},
  number={1},
  pages={5--29},
  year={2004},
  publisher={Springer}
}

@article{bonnel2005proximal,
  title={Proximal methods in vector optimization},
  author={Bonnel, Henri and Iusem, Alfredo Noel and Svaiter, Benar Fux},
  journal={SIAM Journal on Optimization},
  volume={15},
  number={4},
  pages={953--970},
  year={2005},
  publisher={SIAM}
}

@article{qu2011quasi,
  title={Quasi-{N}ewton methods for solving multiobjective optimization},
  author={Qu, Shaojian and Goh, Mark and Chan, Felix TS},
  journal={Operations Research Letters},
  volume={39},
  number={5},
  pages={397--399},
  year={2011},
  publisher={Elsevier}
}

@article{fukuda2011convergence,
  title={On the convergence of the projected gradient method for vector optimization},
  author={Fukuda, Ellen H and Drummond, LM Grana},
  journal={Optimization},
  volume={60},
  number={8-9},
  pages={1009--1021},
  year={2011},
  publisher={Taylor \& Francis}
}

@article{fukuda2014survey,
  title={A survey on multiobjective descent methods},
  author={Fukuda, Ellen H and Drummond, Luis Mauricio Gra{\~n}a},
  journal={Pesquisa Operacional},
  volume={34},
  pages={585--620},
  year={2014},
  publisher={SciELO Brasil}
}

@article{nesterov2015universal,
  title={Universal gradient methods for convex optimization problems},
  author={Nesterov, Yu},
  journal={Mathematical Programming},
  volume={152},
  number={1},
  pages={381--404},
  year={2015},
  publisher={Springer}
}

@article{sener2018multi,
  title={Multi-task learning as multi-objective optimization},
  author={Sener, Ozan and Koltun, Vladlen},
  journal={Advances in neural information processing systems},
  volume={31},
  year={2018}
}

@article{tanabe2019proximal,
  title={Proximal gradient methods for multiobjective optimization and their applications},
  author={Tanabe, Hiroki and Fukuda, Ellen H and Yamashita, Nobuo},
  journal={Computational Optimization and Applications},
  volume={72},
  number={2},
  pages={339--361},
  year={2019},
  publisher={Springer}
}

@article{tanabe2023convergence,
  title={Convergence rates analysis of a multiobjective proximal gradient method},
  author={Tanabe, Hiroki and Fukuda, Ellen H and Yamashita, Nobuo},
  journal={Optimization Letters},
  volume={17},
  number={2},
  pages={333--350},
  year={2023},
  publisher={Springer}
}

@article{tanabe2024merit,
    title={New merit functions for multiobjective optimization and their properties},
    author={Tanabe, Hiroki and Fukuda, Ellen H and Yamashita, Nobuo},
    year={2024},
    journal={Optimization},
    volume={73},
    number={13},
    pages={3821--3858}
}

@article{pinheiro2025universal,
  title={Universal nonmonotone line search method for nonconvex multiobjective optimization problems with convex constraints},
  author={Pinheiro, Maria Eduarda and Grapiglia, Geovani Nunes},
  journal={Computational and Applied Mathematics},
  volume={44},
  number={2},
  pages={56},
  year={2025},
  publisher={Springer}
}

@article{Amaral2025derivativefree,
    author={Amaral, V. S. and Assun\c{c}\~{a}o, P. B. and Souza, D. R.},
    title={A Partially Derivative-Free Proximal Method for Composite Multiobjective Optimization in the {H}\"older Setting},
    journal = {arXiv preprint arXiv:2508.20071},
    year={2025}
}
\bibliographystyle{abbrvnat}

\end{document}